\font\smallit=cmti10
\newcommand{\bburl}[1]{\textcolor{blue}{\url{#1}}}
\renewcommand\section{\@startsection {section}{1}{\z@}
{-30pt \@plus -1ex \@minus -.2ex}
{2.3ex \@plus.2ex}
{\normalfont\normalsize\bfseries\boldmath}}
\renewcommand\subsection{\@startsection{subsection}{2}{\z@}
{-3.25ex\@plus -1ex \@minus -.2ex}
{1.5ex \@plus .2ex}
{\normalfont\normalsize\bfseries\boldmath}}
\renewcommand{\@seccntformat}[1]{\csname the#1\endcsname. }
\newtheorem{thm}{Theorem}[section]
\newtheorem{cor}[thm]{Corollary}
\newtheorem{lem}[thm]{Lemma}
\newtheorem{rek}[thm]{Remark}
\begin{document}

\begin{center}
\uppercase{\bf When the large divisors of a natural number are in arithmetic progression}
\vskip 20pt
{\bf H\`ung Vi\d{\^e}t Chu}\\

\end{center}
\vskip 20pt

\centerline{\bf Abstract}
\noindent Iannucci considered the positive divisors of a natural number $n$ that do not exceed the square root of $n$ and found all numbers whose such divisors are in arithmetic progression. Continuing the work, we define \textit{large divisors} to be divisors at least $\sqrt{n}$ and find all numbers whose large divisors are in arithmetic progression. The asymptotic formula for the count of these numbers up to a bound $x$ is observed to be $\frac{x\log\log x}{\log x}$.

\noindent 2010 {\it Mathematics Subject Classification}: 11B25

\noindent \emph{Keywords: } divisor, arithmetic progression.
\pagestyle{myheadings} 
\thispagestyle{empty} 
\baselineskip=12.875pt 
\vskip 30pt
\section{Introduction}
For a natural number $n$, let $L_n$ denote the set of positive divisors of $n$ that are at least $\sqrt{n}$ and strictly smaller than $n$; that is,
$$L_n\ :=\ \{d\,:\, d|n, \sqrt{n}\le d<n\}.$$
Also, define $$L'_n\ :=\ \{d\,:\, d|n, \sqrt{n}\le d\le n\}.$$ We call $L'_n$ the set of \textit{large divisors} of $n$. Clearly, $L'_n = L_n + 1$. In this paper, we will determine the set of all natural numbers $n$ such that either $L_n$ or $L'_n$ forms an arithmetic progression. Since $L_n\subset L_n'$, if $L_n'$ forms an arithmetic progression, then so does $L_n$. Hence, we will first focus our attention on $L_n$ and find all $n$ such that 
$$L_n\ =\ \{d, d+ a, d+2a,\ldots, d+(k-1)a\}$$
for some natural numbers $d, a$ and $k$. Note that $L_n$ can be empty and in that case, $L_n$ vacuously forms an arithmetic progression. Let $|L_n| = k\ge 0$.

Our work is a companion to a paper of Iannucci \cite{Ian}, who defined \textit{small divisors} of $n$ to be divisors not exceeding $\sqrt{n}$ and found all natural numbers whose small divisors are in arithmetic progression. For previous work on divisors in or not in arithmetic progression, see \cite{BFL, Var} and on small divisors, see \cite{BTK, Ian2}.

As usual, we have the divisor-counting function
$$\tau(n) \ :=\ \sum_{d|n}1.$$ Since $\tau(n)$ is multiplicative, for the $k$ distinct primes $p_1<p_2<\cdots<p_k$, and natural numbers $a_1, a_2, \ldots, a_k$,
\begin{align}\label{multau}\tau(p_1^{a_1}p_2^{a_2}\cdots p_k^{a_k})\ =\ (a_1+1)(a_2+1)\cdots (a_k+1).\end{align}
If $n = bc$ and $b\le c$, then $b\le \sqrt{n}\le c$; hence
\begin{align}\label{rela}\tau(n) \ =\ \begin{cases} 2|L'_n|, & \mbox{ if } n \mbox{ is not a square;}\\ 2|L'_n|-1,  & \mbox{ if } n \mbox{ is a square} \end{cases}\ =\ \begin{cases}2|L_n|+2, & \mbox{ if } n \mbox{ is not a square;}\\ 2|L_n|+1,  & \mbox{ if } n \mbox{ is a square}.\end{cases}\end{align}
\begin{thm}\label{main}
Let $n$ be a natural number. If numbers in $L_n$ are in arithmetic progression, then one of the following is true: 
\begin{itemize}
    \item [(i)] $n = 1$, hence $L_n = \emptyset$.
    \item [(ii)] $n = p$ for some prime $p$, hence $L_n = \emptyset$.
    \item [(iii)] $n = p^2$ for some prime $p$, hence $L_n = \{p\}$.
    \item [(iv)] $n = p^3$ for some prime $p$, hence $L_n = \{p^2\}$.
    \item [(v)] $n = pq$ for some primes $p<q$, hence $L_n = \{q\}$.
    \item [(vi)] $n = p^4$ for some prime $p$, hence $L_n = \{p^2, p^3\}$.
    \item [(vii)] $n = p^5$ for some prime $p$, hence $L_n = \{p^3, p^4\}$.
    \item [(viii)] $n = p^2q$ for some primes $p<q$, hence $L_n = \{p^2, pq\}$ or $L_n = \{q, pq\}$.
    \item [(ix)] $n = pq^2$ for some primes $p<q$, hence $L_n = \{pq, q^2\}$.
    \item [(x)] $n = pqr$ for some primes $p<q<r$, $pq<r$ and $p=\frac{1}{2}(q+1)$, hence $L_n = \{r, rp, rq\}$.
    \item [(xi)] $n = p^3q$ for some primes $p>q$ and $q = \frac{1}{2}(p+1)$, hence $L_n = \{p^2, p^2q, p^3\}$.
\end{itemize}
\end{thm}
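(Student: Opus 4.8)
The plan is to organize the proof around the cardinality $k = |L_n|$, using the bridge identity $\eqref{rela}$ that ties $k$ to $\tau(n)$. Since an arithmetic progression is automatically determined once it has at most two terms, the cases $k=0,1,2$ should be handled purely by the constraint $\tau(n) \in \{2k+1, 2k+2\}$, reading off from the multiplicativity formula $\eqref{multau}$ exactly which factorization shapes give the required small divisor count. Concretely, $k=0$ forces $\tau(n) \le 2$, yielding $n=1$ or $n=p$ (cases (i),(ii)); $k=1$ forces $\tau(n)\in\{3,4\}$, giving $n=p^2,p^3,pq$ (cases (iii),(iv),(v)); and $k=2$ forces $\tau(n)\in\{5,6\}$, giving $n=p^4,p^5,p^2q,pq^2$ (cases (vi)--(ix)). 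For each shape I would then simply list the divisors in $[\sqrt n, n)$ to confirm $L_n$, which is mechanical.

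The substance lies in $k\ge 3$, where being a three-or-more-term progression is a genuine constraint rather than an automatic one. Here my first step is to use $\eqref{rela}$ to see that $\tau(n)\in\{7,8\}$, so $n$ has shape $p^6, p^7, p^3q, pq^3, p^2q^2,$ or $pqr$ (and the analogous $\tau=7$ prime-power cases). The key idea is that the elements of $L_n$ are exactly the cofactors $n/d$ as $d$ runs over the small divisors $d\le\sqrt n$, so an arithmetic progression among large divisors imposes a rigid additive relation. For three consecutive terms $u<v<w$ of $L_n$ the progression condition is simply $u+w=2v$; I would write each of $u,v,w$ as a product of the relevant primes and turn $u+w=2v$ into a Diophantine equation. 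The pure prime-power candidates $p^6,p^7,p^2q^2$ will be eliminated because their large divisors are geometric-like and cannot be equally spaced, while $pq^3$ and $p^3q$ and $pqr$ survive only under the displayed linear relations $p=\tfrac12(q+1)$ or $q=\tfrac12(p+1)$.

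The main obstacle I anticipate is case (x), $n=pqr$ with $p<q<r$, because here $L_n$ can have several orderings depending on the relative sizes of the products $r, pq, pr, qr$, and one must first pin down which three divisors actually lie in $[\sqrt n, n)$ and in what order before writing down the progression equation. I would argue that under $\sqrt n \le r$ the large divisors are $\{r, pr, qr\}$ (equivalently the cofactors of the three smallest divisors $1,p,q$), so the three terms are $r<pr<qr$; the arithmetic condition $r + qr = 2pr$ cancels $r$ to give $1+q=2p$, i.e. $p=\tfrac12(q+1)$, and separately the hypothesis $pq<r$ is what guarantees exactly this ordering and that no fourth large divisor intrudes. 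A parallel but asymmetric computation handles (xi): for $n=p^3q$ the small divisors are among $1,p,p^2,q,pq$, and after determining that $L_n=\{p^2,p^2q,p^3\}$ the relation $p^2+p^3=2p^2q$ reduces to $1+p=2q$, i.e. $q=\tfrac12(p+1)$, which in turn forces $p>q$. The only real care needed throughout is to confirm that no candidate yields a progression of length $k\ge 4$, which follows again from $\eqref{rela}$ since $k\ge4$ would require $\tau(n)\ge 9$ and one checks the corresponding shapes ($p^8, p^2q^2$ revisited, $p^4q$, $p^2qr$, $pqrs$, \ldots) fail the equal-spacing test; handling this "no longer progressions" direction cleanly is where I would spend the most effort.
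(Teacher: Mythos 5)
Your handling of $k\le 3$ matches the substance of the paper's Lemma \ref{allforms} (modulo one slip: $\tau(p^2q^2)=9$, so $p^2q^2$ does not belong in your $\tau(n)\in\{7,8\}$ list for $k=3$; it arises at $k=4$), and your progression equations for cases (x) and (xi) are the right ones. One smaller omission there: for $n=pqr$ you argue only the ordering forced by $pq<r$, but you must also explicitly kill the configuration $r<pq$, where $L_n=\{pq,pr,qr\}$ and $qr-pr=pr-pq$ gives $p=\frac{qr}{2r-q}$, impossible since $2r-q>r>q$; similarly for $n=p^3q$ the two $p<q$ configurations ($L_n=\{p^3,pq,p^2q\}$ and $L_n=\{q,pq,p^2q\}$) need explicit elimination, not just the $p>q$ one. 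These are routine finite checks, so they are fixable gaps, not structural ones.

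The genuine gap is your plan for $k\ge 4$. The case $k=4$ is indeed a finite check: $\tau(n)\in\{9,10\}$ gives only $n\in\{p^8,p^2q^2,p^9,pq^4\}$, each eliminated by equal-spacing equations (the paper's Lemma \ref{not4}, where $pq^4$ alone splits into four orderings relative to $\sqrt n$). But for $k\ge 5$ the only constraint from \eqref{rela} is $\tau(n)\ge 11$, and the set of admissible factorization shapes is \emph{infinite} (every $p^m$ with $m\ge 10$, all $p^aq^b$ with $(a+1)(b+1)\ge 11$, $pqrs$, and so on), with each shape splitting further into unboundedly many orderings of divisors against $\sqrt n$. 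Your ellipsis ``$p^8, p^2q^2$ revisited, $p^4q$, $p^2qr$, $pqrs$, \ldots'' therefore hides an argument that no finite case analysis can complete; this is exactly the step you flag as needing the most effort, and it requires a new idea rather than more cases. The paper's uniform argument is: write $L_n=\{d,d+a,\ldots,d+(k-1)a\}$, set $d=\ell k_1$, $a=\ell k_2$ with $\gcd(k_1,k_2)=1$; a B\'ezout computation shows $\gcd(k_1+(k-2)k_2,\,k_1+(k-1)k_2)=1$, so $N=\ell(k_1+(k-2)k_2)(k_1+(k-1)k_2)$ divides $n$; since $N$ exceeds the largest element $d+(k-1)a\ge\sqrt n$ of $L_n$ and $N\notin L_n$, necessarily $N=n$; then $k_1+(k-3)k_2$ divides $k_1+(k-1)k_2$, and writing $k_1+(k-1)k_2=u(k_1+(k-3)k_2)$ with $u\ge 2$ yields $\frac{3u-1}{u-1}=\frac{k_1}{k_2}+k>5$, forcing $u<2$, a contradiction for all $k\ge 5$ at once. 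This lcm-plus-coprimality step is the heart of the theorem for large $k$ and is absent from your proposal.
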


To prove Theorem \ref{main}, we first find all forms of $n$ when $k\le 3$ by case analysis then show that $k$ cannot be larger than $3$. To find all $n$ such that $L'_n$ forms an arithmetic progression, we need only to check the 11 forms in Theorem \ref{main}. It is easy to prove the following corollary, so we omit the proof. 
\begin{cor}\label{largeN}
Let $n$ be a natural number. If numbers in $L_n'$ are in arithmetic progression, then one of the following is true:
\begin{itemize}
    \item [(i)] $n = 1$, hence $L'_n = \{1\}$.
    \item [(ii)] $n = p$, hence $L'_n = \{p\}$.
    \item [(iii)] $n = p^2$ for some prime $p$, hence $L'_n = \{p, p^2\}$.
    \item [(iv)] $n = p^3$ for some prime $p$, hence $L'_n = \{p^2, p^3\}$.
    \item [(v)] $n = pq$ for some primes $p<q$, hence $L'_n=\{q, pq\}$.
\end{itemize}
\end{cor}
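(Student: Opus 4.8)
The plan is to prove Corollary~\ref{largeN} directly from Theorem~\ref{main}, exploiting the relation $L'_n = L_n \cup \{n\}$ (equivalently $L'_n = L_n + 1$ as sets, since $n$ is the unique divisor that lies in $L'_n$ but not in $L_n$). The key structural observation is that $L'_n$ is an arithmetic progression if and only if $L_n$ is an arithmetic progression \emph{and} appending the largest element $n$ preserves the common difference. Since $L_n \subseteq L'_n$, the hypothesis that $L'_n$ is an AP forces $L_n$ to be an AP as well, so $n$ must already fall into one of the eleven cases (i)--(xi) of Theorem~\ref{main}. Thus the entire proof reduces to a finite check: for each of these eleven forms, I would write down $L_n$ explicitly (the theorem already supplies this), adjoin the top divisor $n$, and test whether the enlarged set is still in arithmetic progression.

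First I would dispose of the cases where $L_n$ has at most one element. In cases (i), (ii) of Theorem~\ref{main} we have $L_n = \emptyset$, so $L'_n = \{n\}$ is a single-element (hence trivially arithmetic) progression; these yield cases (i), (ii) of the corollary. In cases (iii), (iv), (v) of Theorem~\ref{main}, the set $L_n$ is a singleton $\{d\}$, so $L'_n = \{d, n\}$ has exactly two elements and is automatically an AP; these produce cases (iii), (iv), (v) of the corollary, namely $\{p,p^2\}$, $\{p^2,p^3\}$, and $\{q,pq\}$. So far every form survives.

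The substantive part is to check the remaining cases (vi)--(xi), where $|L_n| = k \geq 2$ and adjoining $n$ genuinely constrains the common difference. Here I would take the AP $L_n = \{d, d+a, \ldots, d+(k-1)a\}$ guaranteed by Theorem~\ref{main}, observe that $n$ is the next potential term, and require $n = d + ka$; that is, the gap between the largest large divisor below $n$ and $n$ itself must equal the common difference $a$. For case (vi), $n = p^4$ with $L_n = \{p^2, p^3\}$ has common difference $p^3 - p^2 = p^2(p-1)$, whereas $n - p^3 = p^4 - p^3 = p^3(p-1)$; since $p \geq 2$ these differ, so $L'_n$ is not an AP and (vi) is eliminated. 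The analogous short computations rule out (vii)--(xi): in each case the jump from the top element of $L_n$ up to $n$ strictly exceeds the common difference inside $L_n$, because multiplying by a prime factor grows the divisors faster than the arithmetic step does.

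I expect no real obstacle here, which is exactly why the authors omit the proof; the only mild care needed is to record the common differences and final gaps correctly for the three-element cases (x) and (xi), where the arithmetic is slightly less transparent. For instance in (xi), $L_n = \{p^2, p^2q, p^3\}$ (with $q = \tfrac12(p+1)$, so $p^2 < p^2 q < p^3$ is consistent) has a fixed common difference, and one checks that $n = p^3 q$ does not continue it. Once all of (vi)--(xi) are discarded, the five survivors (i)--(v) are precisely the corollary's list, completing the proof.
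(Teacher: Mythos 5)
Your proposal is correct and follows exactly the route the paper intends (the paper omits the proof but states it: since $L'_n = L_n \cup \{n\}$, an arithmetic $L'_n$ forces $L_n$ to be one of the eleven forms of Theorem~\ref{main}, and one then checks case by case whether adjoining $n$ preserves the common difference). Your case computations, including the elimination of (vi)--(xi) via the final gap $n - \max L_n$ strictly exceeding the common difference, are accurate.
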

\section{Small cases of $|L_n|$}
\begin{lem}\label{allforms}
If $L_n$ forms an arithmetic progression and $k\le 3$, then one of the items in Theorem \ref{main} is true. 
\end{lem}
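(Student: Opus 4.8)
The plan is to convert the hypothesis on $k=|L_n|$ into an exact value of $\tau(n)$ via \eqref{rela}, and then read off the finitely many multiplicative shapes of $n$ from \eqref{multau}. Concretely, if $n$ is not a square then $\tau(n)=2k+2$, while if $n$ is a square then $\tau(n)=2k+1$; as $k$ ranges over $\{0,1,2,3\}$ this confines $\tau(n)$ to $\{1,2\}$, $\{3,4\}$, $\{5,6\}$, $\{7,8\}$. For each admissible value I would list the possible factorizations ($\tau=2$ forces $n=p$, $\tau=4$ forces $n\in\{p^3,pq\}$, $\tau=6$ forces $n\in\{p^5,p^2q,pq^2\}$, $\tau=8$ forces $n\in\{p^7,p^3q,pqr\}$, and the odd values force $n\in\{1,p^2,p^4,p^6\}$), and for each shape compute $L_n$ explicitly.

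For $k\le 2$ this already finishes the proof with no progression condition to verify: every resulting $L_n$ has at most two elements, and a set of size at most two is automatically an arithmetic progression. Thus I only need to locate the large divisors. The cleanest way is to pair each divisor $d$ with its cofactor $n/d$, noting that of the two exactly one lies in $[\sqrt n, n)$ (with the single fixed point $\sqrt n$ when $n$ is a square), so $L_n$ consists of the larger member of each pair except $n$ itself. Running the shapes above through this recovers items (i)--(ix); for instance $L_{p^2q}=\{p^2,pq\}$ or $\{q,pq\}$ according as $p^2\gtrless q$, while $L_{pq^2}=\{pq,q^2\}$.

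The substantive case is $k=3$, where $L_n$ has three elements and the common-difference condition is a genuine constraint. For the prime-power shapes one gets $L_{p^6}=\{p^3,p^4,p^5\}$ and $L_{p^7}=\{p^4,p^5,p^6\}$, whose successive ratios equal the constant $p>1$; hence the gaps grow geometrically and cannot be equal, so these shapes are excluded. For $n=p^3q$ I would split on the sign of $p-q$: deciding which of $p^2,p^3,q,pq,p^2q$ exceed $\sqrt n=p^{3/2}q^{1/2}$ pins down the three large divisors, and imposing the progression equation yields exactly family (xi), with $p>q$ and $q=\tfrac12(p+1)$. Likewise, for $n=pqr$ the three large divisors are the larger members of the pairs $\{p,qr\}$, $\{q,pr\}$, $\{r,pq\}$, namely $qr$, $pr$, and $\max(r,pq)$, and one splits on whether $pq\lessgtr r$.

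The main obstacle is to solve and, more importantly, to rule out the spurious branches of the progression equation in the $k=3$ analysis. When $pq<r$ the progression $\{r,pr,qr\}$ forces $p=\tfrac12(q+1)$, giving (x); but the alternative ordering $pq>r$ leads to the equation $q(p+r)=2pr$, and the crucial step is a divisibility argument: $q\mid 2pr$ with $q$ prime and distinct from $p,r$ forces $q\mid 2$, hence $q=2$, contradicting $q>p\ge 2$, so this branch is empty. The analogous dead ends in the $p^3q$ analysis (the case $p<q$) collapse to divisibility conditions such as $(p+1)\mid 2$, which have no prime solutions. Assembling the surviving shapes gives precisely items (i)--(xi), completing the proof.
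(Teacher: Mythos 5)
Your proposal is correct and takes essentially the same route as the paper: translate $k\le 3$ into $\tau(n)\in\{1,\ldots,8\}$ via \eqref{rela}, enumerate the multiplicative shapes of $n$ from \eqref{multau}, note that $k\le 2$ needs no progression check, and settle the $k=3$ shapes by the same case analysis on divisor orderings, your eliminations (e.g., $q\mid 2pr$ forcing $q=2$, and $(p+1)\mid 2$) being equivalent to, and if anything cleaner than, the paper's. In fact your version is slightly more complete: the paper's proof of the $\tau(n)=8$ case omits the shape $n=p^7$, which you correctly list and rule out via $L_{p^7}=\{p^4,p^5,p^6\}$.
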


\begin{proof}

\noindent \textbf{Case 1:} If $k = 0$, then by \eqref{rela}, $\tau(n)\in \{1,2\}$. If $\tau(n) = 1$, then $n = 1$. If $\tau(n) = 2$, then $n = p$ for some prime $p$. Hence, $L_n = \emptyset$. This corresponds to items $(i)$ and $(ii)$ of the theorem.  

\noindent \textbf{Case 2:} If $k = 1$, then by \eqref{rela}, $\tau(n)\in\{3,4\}$. 

If $\tau(n) = 3$, then by \eqref{multau}, $n = p^2$ for some prime $p$, hence $L_n = \{p\}$. This corresponds to item $(iii)$ of the theorem. 

If $\tau(n) = 4$, then by \eqref{multau}, $n = p^3$ for some prime $p$ or $n = pq$ for some primes $p<q$. For the former, $L_n = \{p^2\}$ and for the latter, $L_n = \{q\}$, corresponding to items $(iv)$ and $(v)$ of the theorem. 

\noindent \textbf{Case 3:} If $k = 2$, then by \eqref{rela}, $\tau(n)\in \{5, 6\}$. 

If $\tau(n) = 5$, then by \eqref{multau}, $n = p^4$ for some prime $p$, hence $L_n = \{p^2, p^3\}$. This corresponds to item $(vi)$. 

If $\tau(n) = 6$, then by \eqref{multau}, $n = p^5$ for some prime $p$ or $n = p^2q$ or $pq^2$ for some primes $p<q$.

\begin{itemize}
    \item [] If $n = p^5$, $L_n = \{p^3, p^4\}$.
    \item [] If $n = p^2q$ for some primes $p<q<p^2$, $L_n = \{p^2, pq\}$. If $n = p^2q$ for some primes $p^2<q$, $L_n = \{q, pq\}$.
    \item [] If $n = pq^2$ for some primes $p<q$, $L_n = \{pq. q^2\}$.
\end{itemize}
These correspond to items $(vii), (viii), (ix)$. 

\noindent \textbf{Case 4:} If $k = 3$, then by \eqref{rela}, $\tau(n)\in \{7,8\}$.

If $\tau(n) = 7$, then by $\eqref{multau}$, $n = p^6$ for some prime $p$. Then $L_n = \{p^3, p^4, p^5\}$, which is impossible since $p^5-p^4\neq p^4-p^3$. 

If $\tau(n) = 8$, then by $\eqref{multau}$, $n=pqr$ for some distinct primes $p,q,r$ or $p^3q$ for some distinct primes $p, q$.

\begin{itemize}
    \item [] If $n = pqr$, we may assume that $p<q<r$. Two subcases apply: either $r>pq$ or $r<pq$.
    \begin{itemize}
        \item [] $r> pq$: We have $L_n = \{r, pr, qr\}$ and so, $qr - pr = pr-r$, which implies that $p = \frac{1}{2}(q+1)$. This is item $(x)$.
        \item [] $r<pq$: We have $L_n = \{pq, pr, qr\}$ and so, $qr - pr = pr - pq$, which implies that $p = \frac{qr}{2r-q}$. So, either $(2r-q)| q$ or $(2r-q)|r$. However, both are impossible since $2r-q>r>q$. 
    \end{itemize}
    \item [] If $n = p^3q$, two subcases apply: either $p<q$ or $p>q$.
    \begin{itemize}
        \item [] $p<q$: If $p<q<p^3$, $L_n = \{p^3, pq, p^2q\}$. Either $p^2q - pq = pq - p^3$ or $p^2q-p^3 = p^3 - pq$. It is easy to see that both cases are impossible. If $q>p^3$, $L_n = \{q, pq, p^2q\}$. Since $p^2q - pq = pq - q$, we get $p^2=2p-1$, which implies that $p = 1$, a contradiction. 
        \item [] $p>q$: $L_n = \{p^2,  p^2q, p^3\}$. So, $p^3 - p^2q = p^2q-p^2$. Then $q = \frac{1}{2}(p+1)$. This is item $(xi)$.
    \end{itemize}
\end{itemize}
\end{proof}
\begin{lem}\label{not4}
Our set $L_n$ cannot have exactly $4$ elements.
\end{lem}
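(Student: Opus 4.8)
The plan is to run the same kind of case analysis used in Lemma~\ref{allforms}, now for $k=4$. By \eqref{rela}, having $|L_n|=4$ is equivalent to $\tau(n)\in\{9,10\}$, with $\tau(n)=9$ occurring only when $n$ is a perfect square. Feeding these values into the factorization \eqref{multau}, the only possible shapes of $n$ are $n=p^8$ and $n=p^2q^2$ (for $\tau(n)=9$), and $n=p^9$ and $n=p^4q$ (for $\tau(n)=10$), where in the last shape $p$ and $q$ are distinct primes in either order. For each shape I would write down the four elements of $L_n$ explicitly and test whether they can form an arithmetic progression.

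The key simplification is that I will not need the full definition of an arithmetic progression: if $t_1<t_2<t_3<t_4$ are in arithmetic progression, then $t_1+t_4=t_2+t_3$, and this single necessary condition already fails in every case. For the prime powers $p^8$ and $p^9$ the set $L_n$ is a block of four consecutive powers of $p$, hence a geometric progression of ratio $p$; the identity $t_1+t_4=t_2+t_3$ then forces $p^3-p^2-p+1=(p-1)^2(p+1)=0$, impossible for a prime $p$. For $n=p^2q^2$ with $p<q$ one has $L_n=\{pq,\,p^2q,\,q^2,\,pq^2\}$ with least term $pq$ and greatest term $pq^2$, and $t_1+t_4=t_2+t_3$ reduces to $q(p-1)=p(p-1)$, i.e.\ $q=p$, a contradiction.

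The substantive case is the shape $n=p^4q$. Here I would use the pairing $d\leftrightarrow n/d$ to identify $L_n$ as the larger member of each divisor pair other than $\{1,n\}$, namely $p^3q$, $p^2q$, $\max(p^3,pq)$ and $\max(p^4,q)$. When $p>q$ these maxima are always $p^3$ and $p^4$, so there is a single configuration; when $p<q$ the content of $L_n$ depends on where $q$ sits among $p^2,p^3,p^4$, which splits into a few subcases. In each configuration I would sort the four divisors and apply $t_1+t_4=t_2+t_3$. Typically this collapses to $q(p-1)=p(p-1)$ and hence $q=p$; in the subcase $p^2<q<p^3$ it instead gives $q(p^2-p+1)=p^3$, whence the prime $q$ divides $p^3$, forcing $q=p$ and then $p=1$; and in the subcase $p^3<q<p^4$ it gives $q(1+p-p^2)=p^3$, whose left side is negative for every prime $p$. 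Every configuration therefore yields a contradiction.

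I expect the bookkeeping in the $n=p^4q$ case to be the main obstacle: correctly ordering the four elements of $L_n$ in each regime of $q$ relative to the powers of $p$, and confirming that each listed divisor genuinely lies in $[\sqrt n,\,n)$, takes some care, whereas the algebra afterward is routine. Once all regimes are checked and each is seen to contradict $t_1+t_4=t_2+t_3$, we conclude that $|L_n|\neq 4$, which is the assertion of the lemma.
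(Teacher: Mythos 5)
Your proposal is correct and is essentially the paper's own proof: both reduce to $\tau(n)\in\{9,10\}$ via \eqref{rela}, enumerate the shapes $p^8$, $p^2q^2$, $p^9$, and $p^4q$ (the paper writes the last as $pq^4$), split the mixed case into regimes according to where the lone prime sits among the powers of the other, and kill each configuration with the necessary condition $t_1+t_4=t_2+t_3$. The only cosmetic differences are your divisor-pairing shortcut for listing $L_n$ and a slightly finer regime split (note that in the regime $q>p^4$ the condition collapses to $(p-1)^2(p+1)=0$, as in your prime-power cases, rather than to $q(p-1)=p(p-1)$), neither of which changes the substance.
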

\begin{proof}
We prove by contradiction. Suppose that $|L_n| = 4$. By \eqref{rela}, $\tau(n) \in \{9,10\}$.
\begin{itemize}
    \item[] If $\tau(n) =9$, by \eqref{multau}, $n = p^8$ for some prime $p$ or $n = p^2q^2$ for some primes $p<q$.
    \begin{itemize}
        \item [] If $n = p^8$, $L_n = \{p^4, p^5, p^6, p^7\}$, which cannot form an arithmetic progression.
        \item [] If $n = p^2q^2$ for $p<q$, $L_n = \{pq, p^2q, q^2, pq^2\}$. So, $pq^2 + pq = p^2q + q^2$, which implies that $p=q$, a contradiction.
        \end{itemize}
    \item[] If $\tau(n) = 10$, either $n = p^{9}$ for some prime $p$ or $n = pq^4$ for distinct primes $p,q$.
    \begin{itemize}
        \item [] If $n = p^9$, $L_n = \{p^5, p^6, p^7, p^8\}$, which cannot form an arithmetic progression.
        \item [] If $n = pq^4$, we have four subcases.
        \begin{itemize}
            \item [] $p<q$: $L_n = \{pq^2, q^3, pq^3, q^4\}$, so $pq^2 + q^4 = q^3 + pq^3$, which implies that $p=q$, a contradiction.
            \item [] $q<p<q^2$: $L_n = \{q^3, pq^2, q^4, pq^3\}$, so $q^3 + pq^3 = pq^2+q^4$, which implies that $p =q$, a contradiction. 
            \item [] $q^2<p<q^4$: $L_n = \{pq, q^4, pq^2, pq^3\}$. Either $pq^3+pq=pq^2+q^4$ or $pq^3+q^4=pq+pq^2$. The former gives $q=1$, while the latter gives $p = -\frac{q^3}{q^2-q-1}$. Both pose a contradiction. 
            \item [] $q^4<p$: $L_n = \{p, pq, pq^2, pq^3\}$, so $p+pq^3 = pq+pq^2$, which implies that $q=1$, a contradiction. 
        \end{itemize}
    \end{itemize}
\end{itemize}
Therefore, $|L_n| \neq 4$.
\end{proof}

\section{Proof of Theorem \ref{main}}
By Lemmata \ref{allforms} and \ref{not4}, to prove Theorem \ref{main}, it suffices to prove that $|L_n|\le 4$. 
\begin{proof}[Proof of Theorem \ref{main}]
We prove by contradiction. Suppose that $k = |L_n|\ge 5$. Recall that
$$L_n \ =\ \{d, d + a, d+2a, \ldots, d+(k-1)a\}$$
for some natural numbers $d$ and $a$. Let $\gcd(d,a) = \ell$. Write $d = \ell k_1$ and $a = \ell k_2$. Clearly, $\gcd(k_1, k_2) = 1$, so there exist integers $s, t$ such that $sk_1 + tk_2 = 1$. 

Let $$M \ =\ [d, d + a, d+2a, \ldots, d+(k-1)a];$$
that is, $M$ denotes the least common multiple of all numbers in $L_n$. Write 
\begin{align*}M &\ =\ [\ell k_1, \ell k_1+ \ell k_2, \ell k_1 + 2\ell k_2, \ldots, \ell k_1 + (k-1)\ell k_2]\\
&\ =\ \ell [k_1, k_1+k_2, k_1+2k_2, \ldots, k_1+(k-1)k_2].
\end{align*}
We claim that $\gcd(k_1+(k-2)k_2, k_1+(k-1)k_2) = 1$. Indeed, let $x = k_1+(k-1)k_2$ and $y = k_1+(k-2)k_2$. We have $x-y = k_2$ and $k_1 = x - (k-1)k_2 = x - (k-1)(x-y)$. Because $sk_1+tk_2 = 1$,
$s(x-(k-1)(x-y))+t(x-y) = 1$. So, $(t+s-s(k-1))x+((k-1)s-t)y=1$, which implies that $\gcd(x,y) = 1$.
Hence, $N = \ell(k_1+(k-2)k_2)(k_1+(k-1)k_2) = \ell[k_1+(k-2)k_2,k_1+(k-1)k_2]$ divides $M$.

Because $N$ divides $M$ and $M$ divides $n$, $N$ divides $n$. Clearly, $N>\ell(k_1+(k-1)k_2) = d+(k-1)a \ge \sqrt{n}$. Because $N\notin L_n$, $N=n$. So, $\ell(k_1+(k-3)k_2)$ divides $N$. Hence, 
$$k_1+(k-3)k_2\mbox{ divides }(k_1+(k-2)k_2)(k_1+(k-1)k_2).$$
Using the same argument as above, we know that $\gcd(k_1+(k-3)k_2,k_1+(k-2)k_2) = 1$. So, 
$$k_1+(k-3)k_2\mbox{ divides }k_1+(k-1)k_2.$$
Write $k_1+(k-1)k_2 = u(k_1+(k-3)k_2)$ for some integer $u\ge 2$. Simplifying the equation, we get
$$\frac{3u-1}{u-1} \ =\ \frac{k_1+kk_2}{k_2}\ =\ \frac{k_1}{k_2} + k \ > \ 5.$$
So, $u<2$. This contradicts that $u\ge 2$. Therefore, $|L_n|<5$, as desired. 
\end{proof}
\begin{rek}\normalfont
We can find out how often a natural number $n$ up to a bound $x>0$ has its large divisors form an arithmetic progression. Let $f(x)$ be the function counting such numbers up to $x$.

The number of $n$ up to $x$ that is either of form $p, p^2$, or $p^3$ for a prime $p$ is asymptotic to
$$\sum_{i=1}^3 \pi(x^{1/i})\ \sim\ \sum_{i=1}^3 \frac{ix^{1/i}}{\log x}.$$
By a result of Landau \cite[\textsection 56]{Lan}, the number of $n\le x$ of the form $pq$ for primes $p<q$ is asymptotic to $$\frac{x\log\log x}{\log x}.$$ Combined with Corollary \ref{largeN}, we know that 
$$f(x)\ \sim\ \frac{x\log\log x}{\log x},$$
which is similar to the asymptotic formula for the case of small divisors \cite{Ian}.
\end{rek} 


\vskip 20pt
{\smallit Department of Mathematics, University of Illinois at Urbana-Champaign, Urbana, IL 61820, USA}\\
{\tt hungchu2@illinois.edu}\\ 

\end{document}